\def\doctype{}
\newcommand\Z{\mathbb{Z}}
\newcommand\F{\mathbb{F}}
\newcommand{\comment}[1]{}
\newcommand{\sinewave}[4][]{\draw[#1]  plot[domain=-1:2] (\x,{#2*sin((#4*pi/180)r + 2*pi*#3*\x r)})}
\newcommand{\vsinewave}[4][]{\draw[#1]  plot[domain=-1:1] ({#2*sin((#4*pi/180)r + 2*pi*#3*\x r)},\x)}
\numberwithin{equation}{section}
\let\oldsection\section
\newcommand\boldsection[1]{\oldsection{\bf #1}}
\newcommand\starsection[1]{\oldsection*{\bf #1}}
\renewcommand\section{\@ifstar\starsection\boldsection}
\newtheoremstyle{theorem}
  {12pt}		  
  {0pt}  
  {\sl}  
  {\parindent}     
  {\bf}  
  {. }    
  { }    
  {}     
\theoremstyle{theorem}
\newtheorem{thm}{Theorem}[section]  
\newtheorem{lemma}[thm]{Lemma}     
\newtheorem{cons}[thm]{Construction}
\newtheoremstyle{definition}
  {12pt}		  
  {0pt}  
  {}  
  {\parindent}     
  {\bf}  
  {. }    
  { }    
  {}     
\theoremstyle{definition}
\newtheorem{ex}[thm]{Example}
\renewcommand{\proofname}{Proof}
\renewenvironment{proof}[1][\proofname]{\par
  \pushQED{\qed}%
  \normalfont \partopsep=\z@skip \topsep=\z@skip
  \trivlist
  \item[\hskip\labelsep
        \scshape
    #1\@addpunct{.}]\ignorespaces
}{%
  \popQED\endtrivlist\@endpefalse
}
\renewcommand*\@maketitle{%
  \normalfont\normalsize
  \@adminfootnotes
  \@mkboth{\@nx\shortauthors}{\@nx\shorttitle}%
  \global\topskip42\p@\relax 
  \@settitle
  \ifx\@empty\authors \else {\vskip 1em
\vtop{\centering\shortauthors\@@par}} \fi
  \ifx\@empty\@date \else {\vskip 1em \vtop{\centering\@date\@@par}}\fi 
  \ifx\@empty\@dedicatory
  \else
    \baselineskip18\p@
    \vtop{\centering{\footnotesize\itshape\@dedicatory\@@par}%
      \global\dimen@i\prevdepth}\prevdepth\dimen@i
  \fi
  \@setabstract
  \normalsize
  \if@titlepage
    \newpage
  \else
    \dimen@34\p@ \advance\dimen@-\baselineskip
    \vskip\dimen@\relax
  \fi
} 
\renewcommand*\@adminfootnotes{%
  \let\@makefnmark\relax  \let\@thefnmark\relax
  \ifx\@empty\@subjclass\else \@footnotetext{\@setsubjclass}\fi
  \ifx\@empty\@keywords\else \@footnotetext{\@setkeywords}\fi
  \ifx\@empty\thankses\else \@footnotetext{%
    \def\par{\let\par\@par}\@setthanks}%
  \fi
\thispagestyle{titlepage}
}
\title{\large Number cubes with consecutive line sums}
\author{Peter J.~Dukes and Joanna Niezen}
\address{\rm Mathematics and Statistics, 
University of Victoria, Victoria, BC, Canada}
\email{dukes@uvic.ca, jniezen@uvic.ca}
\thanks{Research of the first author is supported by NSERC grant 312595--2017}
\date{January 9, 2021}
\begin{document}

\begin{abstract}
We settle the existence of certain `anti-magic' cubes using combinatorial block designs and graph decompositions to align a handful of small examples.
\end{abstract}

\maketitle
\hrule


\section{Introduction}

We address the following question:

\medskip
\begin{minipage}{6in}
\textit{Does there exist, for every integer $n \ge 2$, an $n \times n \times n$ cube of nonnegative integers whose $3n^2$ line sums, in the direction of coordinate axes as shown, are the integers $0,1,2,\dots,3n^2-1$?}
\end{minipage}

\begin{figure}[htbp]
\begin{center}
\begin{tikzpicture}
\draw (1,1)--(1,0);
\draw (1,1)--(0,1);
\draw (0,0)--(1,0);
\draw (0,0)--(0,1);

\draw (1.6,1.4)--(1.6,0.4);
\draw (1.6,1.4)--(0.6,1.4);
\draw[dashed] (0.6,0.4)--(1.6,0.4);
\draw[dashed] (0.6,0.4)--(0.6,1.4);

\draw[dashed]  (0,0)--(0.6,0.4);
\draw (1,0)--(1.6,0.4);
\draw (0,1)--(0.6,1.4);
\draw (1,1)--(1.6,1.4);

\draw[fill=white] (0,0) circle [radius=.2];
\draw[fill=white] (0,1) circle [radius=.2];
\draw[fill=white] (1,0) circle [radius=.2];
\draw[fill=white] (1,1) circle [radius=.2];

\draw[fill=white] (0.6,0.4) circle [radius=.2];
\draw[fill=white] (0.6,1.4) circle [radius=.2];
\draw[fill=white] (1.6,0.4) circle [radius=.2];
\draw[fill=white] (1.6,1.4) circle [radius=.2];

\node at (0,0) {$0$};
\node at (0,1) {$0$};
\node at (1,0) {$1$};
\node at (1,1) {$2$};

\node at (0.6,0.4) {$4$};
\node at (0.6,1.4) {$6$};
\node at  (1.6,0.4) {$4$};
\node at (1.6,1.4) {$5$};

\end{tikzpicture}
\hspace{1cm}
\begin{tikzpicture}

\draw(1,-1)--(1,1);
\draw(0,-1)--(0,1);
\draw(-1,-1)--(-1,1);
\draw(1,-1)--(-1,-1);
\draw(1,0)--(-1,0);
\draw(1,1)--(-1,1);

\draw[dashed] (0.6,-0.6)--(0.6,1.4);
\draw (1.6,-0.6)--(1.6,1.4);
\draw[dashed] (-0.4,-0.6)--(-0.4,1.4);
\draw (-0.4,1.4)--(1.6,1.4);
\draw[dashed] (-0.4,0.4)--(1.6,0.4);
\draw[dashed] (-0.4,-0.6)--(1.6,-0.6);

\draw[dashed] (1.2,-0.2)--(1.2,1.8);
\draw[dashed] (0.2,-0.2)--(0.2,1.8);
\draw (2.2,-0.2)--(2.2,1.8);
\draw[dashed] (0.2,-0.2)--(2.2,-0.2);
\draw[dashed] (0.2,0.8)--(2.2,0.8);
\draw (0.2,1.8)--(2.2,1.8);

\draw (-1,1)--(0.2,1.8);
\draw (0,1)--(1.2,1.8);
\draw (1,1)--(2.2,1.8);
\draw[dashed] (-1,0)--(0.2,0.8);
\draw[dashed] (0,0)--(1.2,0.8);
\draw (1,0)--(2.2,0.8);
\draw[dashed] (-1,-1)--(0.2,-0.2);
\draw[dashed] (0,-1)--(1.2,-0.2);
\draw (1,-1)--(2.2,-0.2);

\draw[fill=white] (0,0) circle [radius=.1];
\draw[fill=white] (0,1) circle [radius=.1];
\draw[fill=white] (0,-1) circle [radius=.1];
\draw[fill=white] (1,0) circle [radius=.1];
\draw[fill=white] (1,1) circle [radius=.1];
\draw[fill=white] (1,-1) circle [radius=.1];
\draw[fill=white] (-1,0) circle [radius=.1];
\draw[fill=white] (-1,1) circle [radius=.1];
\draw[fill=white] (-1,-1) circle [radius=.1];

\draw[fill=white] (0.6,0.4) circle [radius=.1];
\draw[fill=white] (0.6,1.4) circle [radius=.1];
\draw[fill=white] (0.6,-0.6) circle [radius=.1];
\draw[fill=white] (1.6,0.4) circle [radius=.1];
\draw[fill=white] (1.6,1.4) circle [radius=.1];
\draw[fill=white] (1.6,-0.6) circle [radius=.1];
\draw[fill=white] (-0.4,0.4) circle [radius=.1];
\draw[fill=white] (-0.4,1.4) circle [radius=.1];
\draw[fill=white] (-0.4,-0.6) circle [radius=.1];

\draw[fill=white] (1.2,0.8) circle [radius=.1];
\draw[fill=white] (1.2,1.8) circle [radius=.1];
\draw[fill=white] (1.2,-0.2) circle [radius=.1];
\draw[fill=white] (2.2,0.8) circle [radius=.1];
\draw[fill=white] (2.2,1.8) circle [radius=.1];
\draw[fill=white] (2.2,-0.2) circle [radius=.1];
\draw[fill=white] (0.2,0.8) circle [radius=.1];
\draw[fill=white] (0.2,1.8) circle [radius=.1];
\draw[fill=white] (0.2,-0.2) circle [radius=.1];
\end{tikzpicture}
\end{center}
\end{figure}
After a few moments of thought, the reader might find a solution for $n=2$ similar to the one shown.  The next size, $n=3$, can be settled with a little more persistence, or perhaps with the help of a computer program.  A solution for this (and other small examples) are left for the appendix so as not to spoil the fun of the puzzle.  Our main result, stated later as Theorem~\ref{main}, answers the above question in the affirmative.

The two-dimensional version of the problem is fairly easy to settle. Consider integers arranged in an $n \times n$ grid.  The sum of all of the row sums is equal to the sum of all of the column sums.  So these line sums can exhaust the consecutive integers $0,1,2,\dots,2n-1$ only if their total, $\frac{1}{2}(2n-1)(2n)=n(2n-1)$, is even. That is, $n$ must be even to admit a number `square' with consecutive line sums.  And indeed, for $n$ even, the array shown below gives a solution.  (Blank entries are zero.)
$$
\begin{array}{|ccccccc|}
\hline
0 & 0 &&&&& \\
1 & 2 &&&&&\\
&& 2 & 2 &&&\\
&& 3 & 4 &&&\\
&&&& \ddots &&\\
&&&&& \mbox{\small $n-2$}& \mbox{\small $n-2$} \\
&&&&& \mbox{\small $n-1$} & \mbox{\small $n$} \\
\hline
\end{array}
$$

Returning to our (three-dimensional) problem, such number cubes have relationships with combinatorial designs and graph decompositions.  We tour several topics in these areas while setting up our constructions.  A size-$n$ number cube with line sums $0,1,2,\dots,3n^2-1$ is called a \emph{Sarvate--Beam cube},  abbreviated SBC$(n)$, for its connection with a combinatorial design variant introduced by D.G.~Sarvate and W.~Beam,  \cite{SB1}.

\section{Some related objects}
\subsection{Latin squares}

A \emph{Latin square of order} $n$ is a $n \times n$ array with entries from an $n$-element set (often assumed to be $[n]:=\{1,2,\dots,n\}$), such that every element appears exactly once in each row and each column.  A latin square of order $n=3$ is given below, and it is not hard to extend the circulant construction to any positive integer $n$.
$$\begin{array}{|ccc|}
\hline
1 & 2 & 3 \\
2 & 3 & 1 \\
3 & 1 & 2 \\
\hline
\end{array}$$

Given a Latin square $L$ of order $n$, we may define a $\{0,1\}$-valued cube $\widehat{L}: [n]^3 \rightarrow \{0,1\}$ such that 
$$\widehat{L}(i,j,k)=
\begin{cases}
1 & \text{if } L_{ij}=k, \\
0 & \text{otherwise}.
\end{cases}$$
It is simple to check that each of the $3n^2$ line sums of $\widehat{L}$ equals $1$.  As a result, we may add a multiple of $\widehat{L}$ (entrywise) to an SBC$(n)$ to produce a cube with consecutive line sums starting at any nonnegative integer.  We denote an $n \times n \times n$ cube with line sums $a,a+1,a+2,\dots,a+3n^2-1$ by SBC$_a(n)$.

Latin squares also facilitate a construction to ``inflate'' the size of our number cubes.

\begin{lemma}[see also \cite{DSG}]
\label{product}
If there exists an SBC$(n)$, then there exists an SBC$(mn)$ for every positive integer $m$.
\end{lemma}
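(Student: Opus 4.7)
The plan is to inflate the given SBC$(n)$ to an SBC$(mn)$ via a block construction. I would partition the $mn$-cube into $m^3$ sub-blocks of size $n \times n \times n$, indexed by $(a, b, c) \in [m]^3$, and place in sub-block $(a, b, c)$ the translate $C + \alpha(a, b, c)\widehat{L}$, where $C$ is the given SBC$(n)$, $L$ is a Latin square of order $n$, and $\alpha(a, b, c) \in \mathbb{Z}_{\ge 0}$ is a shift yet to be determined. By the discussion preceding the lemma, this makes each sub-block an SBC$_{\alpha(a, b, c)}(n)$.

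A short calculation shows that the big $x$-line at outer position $(b, c) \in [m]^2$ and inner position $(j, k) \in [n]^2$ has sum $m \cdot s_x(j, k) + T_x(b, c)$, where $s_x$ is the $x$-line-sum function of $C$ and $T_x(b, c) := \sum_{a} \alpha(a, b, c)$, with analogous expressions in the $y$- and $z$-directions. Writing each target value $v \in \{0, 1, \ldots, 3m^2 n^2 - 1\}$ uniquely as $v = 3mn^2 q + m s + r$ with $q, r \in \{0, \ldots, m-1\}$ and $s \in \{0, \ldots, 3n^2 - 1\}$, one observes that the unique direction $X \in \{x, y, z\}$ that can realize $v$ is the one with $s \in A_X$ (the $X$-line-sum set of $C$), using shift $T_X = 3mn^2 q + r$. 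Consequently, each of $T_x, T_y, T_z$ must be a bijection from $[m]^2$ onto the $m^2$-element set $S := \{3mn^2 q + r : 0 \le q, r \le m-1\}$.

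The main obstacle is the final step: exhibiting a concrete $\alpha : [m]^3 \to \mathbb{Z}_{\ge 0}$ whose three $2$-way marginals are such bijections onto $S$ with compatible $1$-way marginals. I would decompose $\alpha = 3n^2 f + g$, so that the coarse component $f$ handles the $q$-parts of each shift (its marginals being $m$ times Latin-square-like arrays) and the fine component $g$ handles the $r$-parts ($[m]$-valued Latin-square-like marginals). A symbolic construction via mutually orthogonal Latin squares of order $m$ works uniformly for $m \notin \{2, 6\}$, while the sporadic orders can be verified by direct construction; for example, when $m = 2$, taking $\alpha(1, 2, 2) = 1$, $\alpha(2, 1, 2) = 6n^2 + 1$, $\alpha(2, 2, 1) = 6n^2$, and $\alpha \equiv 0$ elsewhere, one checks that the three marginals each biject $[2]^2$ onto $\{0, 1, 6n^2, 6n^2 + 1\}$, confirming the lemma in this case.
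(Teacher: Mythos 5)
Your reduction is sound as far as it goes: filling every sub-block with $C+\alpha(a,b,c)\widehat{L}$ does make each long line sum equal to $m\,s_X(j,k)+T_X(b,c)$, and the base-$m$ style decomposition $v=3mn^2q+ms+r$ correctly forces each $T_X$ to be a bijection from $[m]^2$ onto $S=\{3mn^2q+r: 0\le q,r\le m-1\}$. But the proof then stops exactly where the real work begins: you never construct, for general $m$, a single array $\alpha:[m]^3\to\Z_{\ge 0}$ all three of whose $2$-way marginals are bijections onto $S$. The appeal to mutually orthogonal Latin squares is only an assertion (no formula for $f$ and $g$ is given, and it is not obvious that prescribing three pairs of orthogonal marginals is simultaneously achievable by one $3$-dimensional array), and it cannot work ``uniformly'' in the stated form for $m=6$, where a pair of MOLS famously does not exist; of the ``sporadic orders'' only $m=2$ is actually verified. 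As written, the lemma is proved only for $m=2$.

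The gap is fixable within your own framework, and the fix shows why the extra machinery is unnecessary: support the nonzero values of $\alpha$ on the cells $\{(a,b,c): M_{bc}=a\}$ of a single Latin square $M$ of order $m$, assigning the $m^2$ elements of $S$ to these cells bijectively in any order. Each line of $[m]^3$ in each axis direction meets this support exactly once, so every $2$-way marginal $T_X$ is a single term and automatically bijects onto $S$ --- no orthogonality and no exceptional orders. The paper's proof is simpler still: it places the $m^2$ cubes SBC$_{3tn^2}(n)$, $t=0,\dots,m^2-1$, on the support of a Latin square and leaves the remaining $m^3-m^2$ sub-blocks identically zero, so each long line meets exactly one nonzero sub-block and directly inherits its line sum; the $m^2$ abutting intervals of length $3n^2$ then tile $\{0,\dots,3m^2n^2-1\}$ with no arithmetic on shifts at all. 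Your all-blocks-filled variant buys nothing over this and costs you the unproved marginal condition.
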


\begin{proof}
Let $L$ be a Latin square of order $m$ with entries in $\{0,1,\dots,m-1\}$ and, from the remarks above, let $C_t$ be an SBC$_{3tn^2}(n)$ for $t=0,1,2,\dots,m^2-1$. Arrange these $m^2$ cubes in an $m \times m \times m$ array by putting $C_{km+i}$ in position $(i,j,k)$ whenever $L_{ij} = k$.  With all other entries equal to zero, the line sums in the size-$mn$ cube are precisely the line sums in the size-$m$ cubes, which altogether cover the interval from $0$ to $3m^2n^2-1$.
\end{proof}

We illustrate the method by building an SBC$(6)$.  First, we take nine copies of an SBC$_{12t}(2)$ with abutting line sums, as shown at left for $t=0,1,\dots,8$.  These are placed (in any order) at the preimages $(i,j,k) \in \widehat{L}^{-1}(1)$ arising from a latin square $L$ of order 3, shown with solid dots at right.  The rest of the $6 \times 6 \times 6$ cube is filled with zeros.

\begin{figure}[htbp]
\begin{center}
\begin{tikzpicture}
\draw (1,1)--(1,0);
\draw (1,1)--(0,1);
\draw (0,0)--(1,0);
\draw (0,0)--(0,1);

\draw (1.6,1.4)--(1.6,0.4);
\draw (1.6,1.4)--(0.6,1.4);
\draw[dashed] (0.6,0.4)--(1.6,0.4);
\draw[dashed] (0.6,0.4)--(0.6,1.4);

\draw[dashed]  (0,0)--(0.5,0.4);
\draw (1,0)--(1.6,0.4);
\draw (0,1)--(0.6,1.4);
\draw (1,1)--(1.6,1.4);

\draw[fill=white] (0,0) circle [radius=.2];
\draw[fill=white] (0,1) circle [radius=.2];
\draw[fill=white] (1,0) circle [radius=.2];
\draw[fill=white] (1,1) circle [radius=.2];

\draw[fill=white] (0.6,0.4) circle [radius=.2];
\draw[fill=white] (0.6,1.4) circle [radius=.2];
\draw[fill=white] (1.6,0.4) circle [radius=.2];
\draw[fill=white] (1.6,1.4) circle [radius=.2];

\node at (0,0) {$0$};
\node at (0,1) {$0$};
\node at (1,0) {$1$};
\node at (1,1) {$2$};

\node at (0.6,0.4) {$4$};
\node at (0.6,1.4) {$6$};
\node at  (1.6,0.4) {$4$};
\node at (1.6,1.4) {$5$};

\node at (2.3,0.6){\large $+12t$};

\draw (4,1)--(4,0);
\draw (4,1)--(3,1);
\draw (3,0)--(4,0);
\draw (3,0)--(3,1);

\draw (4.6,1.4)--(4.6,0.4);
\draw (4.6,1.4)--(3.6,1.4);
\draw[dashed] (3.6,0.4)--(4.6,0.4);
\draw[dashed] (3.6,0.4)--(3.6,1.4);

\draw[dashed]  (3,0)--(3.6,0.4);
\draw (4,0)--(4.6,0.4);
\draw (3,1)--(3.6,1.4);
\draw (4,1)--(4.6,1.4);

\draw[fill=white] (3,0) circle [radius=.2];
\draw[fill=white] (3,1) circle [radius=.2];
\draw[fill=white] (4,0) circle [radius=.2];
\draw[fill=white] (4,1) circle [radius=.2];

\draw[fill=white] (3.6,0.4) circle [radius=.2];
\draw[fill=white] (3.6,1.4) circle [radius=.2];
\draw[fill=white] (4.6,0.4) circle [radius=.2];
\draw[fill=white] (4.6,1.4) circle [radius=.2];

\node at (3,0) {$0$};
\node at (3,1) {$1$};
\node at (4,0) {$1$};
\node at (4,1) {$0$};

\node at (3.6,0.4) {$1$};
\node at (3.6,1.4) {$0$};
\node at (4.6,0.4) {$0$};
\node at (4.6,1.4) {$1$};
\end{tikzpicture}
\hspace{1cm}
\begin{tikzpicture}

\draw(1,-1)--(1,1);
\draw(0,-1)--(0,1);
\draw(-1,-1)--(-1,1);
\draw(1,-1)--(-1,-1);
\draw(1,0)--(-1,0);
\draw(1,1)--(-1,1);

\draw[dashed] (0.6,-0.6)--(0.6,1.4);
\draw (1.6,-0.6)--(1.6,1.4);
\draw[dashed] (-0.4,-0.6)--(-0.4,1.4);
\draw (-0.4,1.4)--(1.6,1.4);
\draw[dashed] (-0.4,0.4)--(1.6,0.4);
\draw[dashed] (-0.4,-0.6)--(1.6,-0.6);

\draw[dashed] (1.2,-0.2)--(1.2,1.8);
\draw[dashed] (0.2,-0.2)--(0.2,1.8);
\draw (2.2,-0.2)--(2.2,1.8);
\draw[dashed] (0.2,-0.2)--(2.2,-0.2);
\draw[dashed] (0.2,0.8)--(2.2,0.8);
\draw (0.2,1.8)--(2.2,1.8);

\draw (-1,1)--(0.2,1.8);
\draw (0,1)--(1.2,1.8);
\draw (1,1)--(2.2,1.8);
\draw[dashed] (-1,0)--(0.2,0.8);
\draw[dashed] (0,0)--(1.2,0.8);
\draw (1,0)--(2.2,0.8);
\draw[dashed] (-1,-1)--(0.2,-0.2);
\draw[dashed] (0,-1)--(1.2,-0.2);
\draw (1,-1)--(2.2,-0.2);

\draw[fill=black] (0,0) circle [radius=.1];
\draw[fill=white] (0,1) circle [radius=.1];
\draw[fill=white] (0,-1) circle [radius=.1];
\draw[fill=white] (1,0) circle [radius=.1];
\draw[fill=black] (1,1) circle [radius=.1];
\draw[fill=white] (1,-1) circle [radius=.1];
\draw[fill=white] (-1,0) circle [radius=.1];
\draw[fill=white] (-1,1) circle [radius=.1];
\draw[fill=black] (-1,-1) circle [radius=.1];

\draw[fill=white] (0.6,0.4) circle [radius=.1];
\draw[fill=black] (0.6,1.4) circle [radius=.1];
\draw[fill=white] (0.6,-0.6) circle [radius=.1];
\draw[fill=white] (1.6,0.4) circle [radius=.1];
\draw[fill=white] (1.6,1.4) circle [radius=.1];
\draw[fill=black] (1.6,-0.6) circle [radius=.1];
\draw[fill=black] (-0.4,0.4) circle [radius=.1];
\draw[fill=white] (-0.4,1.4) circle [radius=.1];
\draw[fill=white] (-0.4,-0.6) circle [radius=.1];

\draw[fill=white] (1.2,0.8) circle [radius=.1];
\draw[fill=white] (1.2,1.8) circle [radius=.1];
\draw[fill=black] (1.2,-0.2) circle [radius=.1];
\draw[fill=black] (2.2,0.8) circle [radius=.1];
\draw[fill=white] (2.2,1.8) circle [radius=.1];
\draw[fill=white] (2.2,-0.2) circle [radius=.1];
\draw[fill=white] (0.2,0.8) circle [radius=.1];
\draw[fill=black] (0.2,1.8) circle [radius=.1];
\draw[fill=white] (0.2,-0.2) circle [radius=.1];
\end{tikzpicture}
\end{center}
\end{figure}

Although Lemma~\ref{product} reduces our problem of finding SBC$(n)$ to prime values $n$, a direct construction for primes has eluded our efforts.  Later, we introduce a more powerful construction that works equally well for both prime and composite integers.

\subsection{Designs}

Let $v$ be a positive integer and $K \subseteq \{2,3,4,\dots\}$.
A \emph{pairwise balanced design} PBD$(v,K)$ is a pair $(V,\mathcal{B})$, where 
\begin{itemize}
\item
$V$ is a set with $|V|=v$;
\item
$\mathcal{B}$ is a family of subsets of $V$, called \emph{blocks}, where $|B| \in K$ for every $B \in \mathcal{B}$; and
\item
any two distinct elements of $V$ appear together in exactly one block.
\end{itemize}
A special class of designs useful for our constructions to follow are the finite planes.  From a finite field $\F_q$ of order $q$, we may construct an  \emph{affine plane}, with elements $\F_q^2$ and blocks given by the affine lines.  This produces a pairwise balanced design PBD$(q^2,\{q\})$.  If each of the $q+1$ parallel classes of lines is projectively extended, the result is a \emph{projective plane} of order $q$, which is also a PBD$(q^2+q+1,\{q+1\})$.

A PBD$(v,\{3\})$ is also known as a \emph{Steiner triple system}; these are known to exist for all positive integers $v \equiv 1,3 \pmod{6}$.  Two interesting small examples come from finite planes.  A Steiner triple system with $v=7$ arises from the projective plane of order $q=2$.  An explicit construction on $V=\Z/7\Z$ comes from developing the ``base block'' $\{0,1,3\}$ additively mod $7$, producing a family $\mathcal{B}$ of seven $3$-subsets covering every pair exactly once.  A Steiner triple system with $v=9$ arises from the affine plane of order $q=3$.  Here, a convenient presentation is to take as points the nine elements of a $3 \times 3$ grid, and as blocks the rows, columns, diagonals, and broken diagonals, giving twelve $3$-subsets in $\mathcal{B}$.
More information on finite planes and Steiner triple systems, including their historical origins, can be found in \cite{handbook}.

In general, Wilson's theory \cite{Wilson} says that pairwise balanced designs PBD$(v,K)$ exist for all $v$ greater than some constant $v_0(K)$, provided certain congruence conditions hold.  The congruence conditions disappear if, for instance, $K$ contains three consecutive integers. To illustrate Wilson's theory in this case, and for later reference, we cite the following result.
\begin{lemma}[\cite{pbd456}]
\label{pbd456}
There exists a PBD$(v,\{4,5,6\})$ for all $v \ge 24$, and also for $v \in \{4,5,6,13,16,17,$ $20,21,22\}$.
\end{lemma}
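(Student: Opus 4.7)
The proof plan follows the standard PBD-closure paradigm. Since $\{4,5,6\}$ contains three consecutive integers, there are no necessary divisibility conditions on $v$, so Wilson's theory \cite{Wilson} guarantees the existence of some threshold $v_0$ above which PBD$(v,\{4,5,6\})$ exists. The work is thus to pin down $v_0 = 24$ and dispose of the listed sporadic small values below this threshold.

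For the sporadic values, $v \in \{4,5,6\}$ is trivial: take a single block on all the points. The case $v = 13$ is handled by the projective plane of order $3$, giving a PBD$(13,\{4\})$, and $v=16$ by the affine plane of order $4$, giving a PBD$(16,\{4\})$. For the remaining values $v \in \{17,20,21,22\}$ I would exhibit explicit cyclic or $1$-rotational constructions developed from a small number of base blocks modulo $v$; for instance $v=21$ uses the projective plane of order $4$, which is a PBD$(21,\{5\})$, while $v=17$ and $v=22$ admit hybrid constructions mixing blocks of sizes $4$, $5$, and $6$. These are finite in number and verified by direct inspection.

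For the recursive step I would apply the transversal-design construction. Given a TD$(k,n)$ (equivalently, $k-2$ mutually orthogonal Latin squares of order $n$), place a PBD$(n+a,\{4,5,6\})$ on each of the $k$ groups together with a fixed set of $a$ adjoined points, and use the original TD blocks (each of size $k$) as additional blocks. Choosing $k \in \{4,5,6\}$ and $a \ge 0$ so that $n+a$ is already known to be in the PBD-closure of $\{4,5,6\}$ yields a PBD$(kn+a,\{4,5,6\})$. Since TD$(6,n)$ exists for all sufficiently large $n$ with only a handful of exceptions, this machinery produces PBDs for every sufficiently large $v$, possibly combined with truncation of one group of a TD to tune the residue of $v$. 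The main obstacle lies in closing the interval between $v = 24$ and the threshold at which this recursive step becomes freely applicable: the small bad orders of TD$(6,n)$ force substitution of TD$(5,n)$ or TD$(4,n)$ constructions, and a finite table of residues must be verified by hand or computer to finish the argument.
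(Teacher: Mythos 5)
The paper offers no proof of this lemma: it is quoted verbatim from Lenz \cite{pbd456} and used as a black box, so there is nothing internal to compare your argument against. Your outline is, however, essentially the standard route by which such results are established: Wilson's asymptotic theory to guarantee a finite threshold (and you correctly note that for $K=\{4,5,6\}$ the parameters $\alpha(K)=\gcd\{3,4,5\}=1$ and $\beta(K)=\gcd\{12,20,30\}=2$ impose no congruence restriction), planes of orders $3$ and $4$ for $v=13,16,21$, the extended affine plane for $v=17$, and transversal-design recursions with truncation to close the gap down to the stated bound of $24$. A transversal design TD$(4,5)$ together with its four groups taken as blocks handles $v=20$, so the genuinely ad hoc cases are few.

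Two points need repair or honest acknowledgment. First, your recursive step as literally written is wrong for $a\ge 2$: if you adjoin a common set $A$ of $a$ points to every group of a TD$(k,n)$ and place a full PBD$(n+a,\{4,5,6\})$ on each $G_i\cup A$, then every pair inside $A$ is covered $k$ times rather than once. The standard fix is to require the filling design to contain $A$ as a block (or a flat) and to delete that block from all but one of the $k$ copies, or equivalently to fill $k-1$ of the groups with incomplete designs having a hole on $A$; for $a\le 1$ the issue vanishes, which is why the $1$-point extension is the easiest version. Second, the quantitative content of the lemma --- that the threshold is exactly $24$ and that the sporadic list is exactly $\{4,5,6,13,16,17,20,21,22\}$ --- cannot come out of Wilson's theorem, which gives only an ineffective bound; it requires the finite case analysis you defer to ``a finite table of residues,'' and that table is precisely the substance of Lenz's paper. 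So your proposal is a correct strategic skeleton of the known proof, but it is a skeleton: the one concrete construction it commits to in the recursion is stated incorrectly, and the part that makes the lemma sharp is not carried out.
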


In \cite{SB1}, a curious variant of Steiner triple systems was introduced.
A \emph{Sarvate--Beam triple system} of order $v$, or SBTS$(v)$, is an assignment $f:\binom{[v]}{3} \rightarrow \Z_{\ge 0}$ of nonnegative integer weights to the $3$-subsets of $[v]$ such that
\begin{equation}
\label{sbts-cond}
\{\tilde{f}(\{i,j\}) : 1 \le i < j \le v\} = \{0,1,2,\dots,\tbinom{v}{2}-1\},
\end{equation}
where $\tilde{f}(\{i,j\}) = \sum_{k \neq i,j} f(\{i,j,k\})$.
That is, an SBTS$(v)$ is a multiset of $3$-element subsets with the property that the induced frequencies on $2$-element subsets cover the range of values from $0$ to $\binom{v}{2}-1$, inclusive.  Note that if the right side of \eqref{sbts-cond} is changed to $\{1\}$, we recover the definition of a Steiner triple system.  For example, an SBTS$(5)$ arises from the following assignment of (positive) block multiplicities.

\begin{center}
\begin{tabular}{ccccccc}
\hline
block & $\{2,3,4\}$ &
$\{1,3,5\}$ &
$\{2,3,5\}$ &
$\{1,4,5\}$ &
$\{2,4,5\}$ &
$\{3,4,5\}$ \\
\hline
multiplicity & 2 & 1 & 3 & 2 & 5 & 2\\
\hline
\end{tabular}
\end{center}
 
See also \cite{St2} for a similar example and \cite{DSG} for a construction for each $v \ge 4$.

\subsection{Graph and multigraph decompositions}

Taking an alternative viewpoint, a Steiner triple system on $v$ elements is equivalent to an edge-decomposition of the complete graph $K_v$ into triangles.  Correspondingly, an SBTS$(v)$ is an edge decomposition into triangles of a certain multigraph on $v$ vertices, namely one whose edge multiplicities are $0,1,2,\dots,\binom{v}{2}-1$ (in some arrangement).

For a simple graph $G$ and nonnegative integer $a$, let $\Gamma_a(G)$ denote the set of all multigraphs obtained by assigning multiplicities $a,a+1,\dots,a+|E(G)|-1$ to the edges of $G$.  Let $\Delta_a(G)$ denote the subset of $\Gamma_a(G)$ consisting of those graphs that admit a triangle decomposition.  An SBTS$(v)$ exists if and only if $\Delta_0(K_v) \neq \emptyset$.

Consider now the complete $3$-partite graph $K_{n,n,n}$.  A triangle decomposition of this graph is equivalent to a Latin square of order $n$: simply associate each triangle, say on vertices $\{i_1,j_2,k_3\}$ (where subscripts indicate partite sets), with the placement of symbol $k$ in position $(i,j)$.
Similar to the above, the existence of a Sarvate--Beam cube of order $n$ is equivalent to showing $\Delta_0(K_{n,n,n}) \neq \emptyset$.

The direct product graph $K_3 \times K_n$ resembles $K_{n,n,n}$, except that the former is missing the edges from a family of $n$ vertex-disjoint triangles.  A Latin square $L$ whose diagonal entries satisfy $L_{ii}=i$ is called \emph{idempotent}.  It is easy to see that an idempotent Latin square of order $n$ exists for all positive integers $n \neq 2$.  Using the off-diagonal entries of an idempotent Latin square of order $n$, we see that the graph $K_3 \times K_n$ has a triangle decomposition for all $n \neq 2$.

For our construction to follow, it is helpful to have an SBC-like variant based on $K_3 \times K_n$.  We define a \emph{holey Sarvate--Beam cube}, or SBHC$(n,1^n)$ to be a size-$n$ cube where entry $(i,j,k)$ is filled with a nonnegative integer if and only if $i,j,k$ are pairwise distinct, and such that the $3n(n-1)$ nonempty line sums are $0,1,2,\dots,3n(n-1)-1$.  An SBHC$(n,1^n)$ is equivalent to a triangle decomposition of a multigraph in the class $\Delta_0(K_3 \times K_n)$.  The appendix gives an SBHC$(n,1^n)$ for each $n \in \{4,5,6\}$.

Let $J_n$ denote the graph $K_3 \times K_n$ with one additional triangle, as depicted in Figure~\ref{sbjc} for $n \in \{3,4\}$.

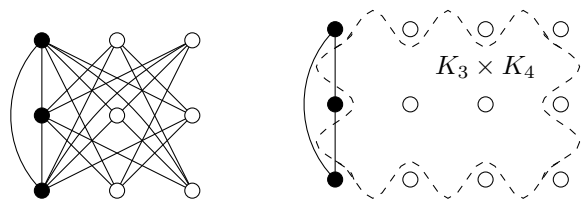
\begin{figure}[htbp]
\begin{center}
\begin{tikzpicture}
\draw (-1,-1)--(-1,1);
\draw (1,1)--(-1,-1);
\draw (1,-1)--(-1,1);
\draw (-1,0)--(1,1);
\draw (-1,0)--(1,-1);
\draw (-1,-1)--(1,0);
\draw (-1,1)--(1,0);

\draw (-1,-1)--(0,1);
\draw (0,-1)--(1,1);
\draw (-1,1)--(0,-1);
\draw (0,1)--(1,-1);

\draw (-1,-1) to [out=60,in=210] (1,1);
\draw (-1,1) to [out=330,in=120] (1,-1);

\draw (0,1)--(-1,0);
\draw (-1,-1) to [out=135,in=225] (-1,1);

\draw (0,-1)--(1,0);
\draw (0,1)--(1,0);
\draw (0,-1)--(-1,0);

\draw[fill=white] (0,0) circle [radius=.1];
\draw[fill=white] (0,1) circle [radius=.1];
\draw[fill=white] (0,-1) circle [radius=.1];
\draw[fill=white] (1,0) circle [radius=.1];
\draw[fill=white] (1,1) circle [radius=.1];
\draw[fill=white] (1,-1) circle [radius=.1];
\filldraw (-1,0) circle [radius=.1];
\filldraw (-1,1) circle [radius=.1];
\filldraw (-1,-1) circle [radius=.1];
\end{tikzpicture}
\hspace{1cm}
\begin{tikzpicture}
\draw (-1,-1)--(-1,1);
\draw (-1,-1) to [out=135,in=225] (-1,1);

\filldraw (-1,0) circle [radius=.1];
\filldraw (-1,1) circle [radius=.1];
\filldraw (-1,-1) circle [radius=.1];
\draw[fill=white] (0,0) circle [radius=.1];
\draw[fill=white] (0,1) circle [radius=.1];
\draw[fill=white] (0,-1) circle [radius=.1];
\draw[fill=white] (1,0) circle [radius=.1];
\draw[fill=white] (1,1) circle [radius=.1];
\draw[fill=white] (1,-1) circle [radius=.1];
\draw[fill=white] (2,0) circle [radius=.1];
\draw[fill=white] (2,1) circle [radius=.1];
\draw[fill=white] (2,-1) circle [radius=.1];

\sinewave[dashed,yshift=1cm]{0.25}{1}{-90};
\sinewave[dashed,yshift=-1cm]{0.25}{1}{90};
\vsinewave[dashed,xshift=-1cm]{0.25}{1}{90};
\vsinewave[dashed,xshift=2cm]{0.25}{1}{-90};

\node at (1,0.5) {$K_3 \times K_4$};

\end{tikzpicture}
\caption{The graphs $J_3$ and $J_4$.}
\label{sbjc}
\end{center}
\end{figure}
Following the notation for holey Sarvate--Beam cubes above, it is natural to use the notation SBHC$(n,1^{n-1})$ for a triangle decomposition (in cube form) of a multigraph in $\Delta_0(J_n)$, $n=3,4$.  An example cube for each of these sizes appears in the appendix.

\section{A construction}

We illustrate the main idea of our construction to follow by building a solution to our problem for $n=7$.  In a little more detail, we 
use a template Steiner triple system on $7$ elements and a graph in $\Delta_0(J_3)$ to construct an SBC$(7)$.  

\begin{ex} 
\label{7} 
We claim that $K_{7,7,7}$ decomposes into seven copies of $J_3$, using a Steiner triple system 
with blocks
$$
\{\underline{0},1,3\},
\{\underline{1},2,4\},
\{\underline{2},3,5\},
\{\underline{3},4,6\},
\{\underline{4},5,0\},
\{\underline{5},6,1\},
\{\underline{6},0,2\}.$$
Each element is replaced by three vertices; each block is to be replaced with a copy of $J_3$ so that the underlined element takes the role of the three filled vertices of $J_3$ in Figure~\ref{sbjc}.
Consider an edge $(i_1,j_2)$ of $K_{7,7,7}$.  If $i=j$, this edge appears in the copy of $J_3$ on the block in which $i$ is underlined.  Otherwise, if $i \neq j$, 
this edge is in the copy of $J_3$ corresponding to the block of the triple system containing $\{i,j\}$.

Finally, we turn $K_{7,7,7}$ into a triangle-decomposable multigraph as follows: the first copy of $J_3$ is replaced by a graph in $\Delta_0(J_3)$, the second by a graph in $\Delta_{21}(J_3)$, the third by a graph in $\Delta_{42}(J_3)$, and so on.  Note that each graph in $\Delta_a(J_k)$ can be obtained from a corresponding graph in $\Delta_0(J_k)$ by increasing the multiplicity of each edge by $a$.  Because every edge of $K_{7,7,7}$ occurs in some copy of $J_3$, and because starting values were chosen as multiples of $|E(J_3)|=21$, we have constructed a graph in $\Delta_{0}(K_{7,7,7})$.
\end{ex}

We present a general construction that captures the above technique.
 
\begin{cons}
\label{cons}
Suppose $\{G_1,\dots,G_b\}$ is an edge decomposition of $G$.  Suppose, for each $i=1,\dots,b$, the graph $G_i$ has a triangle decomposition and also that
$\Delta_0(G_i) \neq \emptyset$.
Then $\Delta_0(G) \neq \emptyset$; that is, some multigraph in $\Gamma_0(G)$ has a triangle decomposition.
\end{cons}

\begin{proof}
Put $m_i=|E(G_i)|$ for each $i$, and 
$a_i=\sum_{j<i} m_j$.  
Since each $G_i$ is simple and has a triangle decomposition, we may increase all edge multiplicities of a graph in $\Delta_0(G_i)$
to produce a graph in $\Delta_a(G_i)$ for any nonnegative integer $a$.  Let $H_i$ denote a multigraph in $\Delta_{a_i}(G_i)$.

We have that $\{H_1,\dots,H_b\}$ is an edge decomposition of some graph in $\Gamma_0(G)$, call it $H$, since the edges of $G$ occur with multiplicities $0,\dots,a_1-1$ in $H_1$, $a_1,\dots,a_2-1$ in $H_2$, and so on, until $a_{b-1},\dots,a_b-1$ in $H_b$.  Since each $H_i$ has a triangle decomposition, it follows that $H$ does as well.
\end{proof}

\section{Solution of the problem}

We are now ready for our main result.
The proof essentially consists of a few direct constructions and a prescription to apply Construction~\ref{cons} for $G=K_{n,n,n}$.  The necessary `building block' constructions can be found in the appendix.

\begin{thm}
\label{main}
There exists an SBC$(n)$ for every integer $n \ge 2$.
\end{thm}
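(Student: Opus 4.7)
The plan is to apply Construction~\ref{cons} with $G = K_{n,n,n}$, decomposing its edge set into ingredient subgraphs each admitting both a triangle decomposition and a $\Delta_0$-representative. Our library of ingredients is drawn from the appendix: SBHC$(k,1^k)$ for $k \in \{4,5,6\}$, which realise $\Delta_0(K_3 \times K_k) \neq \emptyset$, and SBHC$(k,1^{k-1})$ for $k \in \{3,4\}$, which realise $\Delta_0(J_k) \neq \emptyset$. Once an SBC is constructed in a given order, Lemma~\ref{product} lifts it to every multiple, so it suffices to settle a skeleton of orders.

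The main argument is a PBD-based decomposition generalising Example~\ref{7}. For $n$ in the range of Lemma~\ref{pbd456}, take a PBD$(n,\{4,5,6\})$ on $[n]$ and replace each point $x$ by the triple $\{x_1,x_2,x_3\}$, one vertex per partite class of $K_{n,n,n}$. Each pair $\{x,y\}$ lies in a unique block $B$, routing the six between-point edges $x_iy_j$ ($i \neq j$) to the inflation of $B$; for each block $B$ of size $k$ these edges form a copy of $K_3 \times K_k$. To handle the $n$ within-point triangles $\{x_1,x_2,x_3\}$ that remain, we designate for each $x$ an incident size-$4$ block $B_x$ and upgrade its ingredient from $K_3 \times K_4$ to $J_4$ by absorbing exactly the triangle on $x$. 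The designation $x \mapsto B_x$ must be injective on blocks, and a Hall-type matching argument yields such a designation provided the underlying PBD has a sufficient supply of size-$4$ blocks --- a condition we verify inside the Wilson-type constructions guaranteed by Lemma~\ref{pbd456}.

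The orders not directly handled by Lemma~\ref{pbd456} are $\{2,3,7,8,9,10,11,12,14,15,18,19,23\}$. The composite entries $\{8,9,10,12,14,15,18\}$ are multiples of previously settled orders and inherit SBCs by Lemma~\ref{product}. The remaining entries $\{2,3,7,11,19,23\}$ are built from scratch, with the $n=7$ case already described in Example~\ref{7} and ad hoc SBCs for $n \in \{2,3,11,19,23\}$ recorded in the appendix.

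The main obstacle is the bookkeeping of the second paragraph: the diagonal of $n$ within-point triangles cannot be covered as a single Construction~\ref{cons} piece, because the three edges of any triangle must carry equal multiplicity in any triangle decomposition, forcing $\Delta_a(nK_3) = \emptyset$ for every $a \geq 0$. The triangles must therefore be distributed among the blocks one at a time, and verifying that each augmented block stays within our library of $\Delta_0$-representatives is the delicate step.
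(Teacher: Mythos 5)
Your overall strategy (inflate a PBD to decompose $K_{n,n,n}$ and feed the pieces to Construction~\ref{cons}) matches the paper's, and you correctly identify the crux: the $n$ diagonal triangles $\{x_1,x_2,x_3\}$ cannot form their own piece. But your resolution of that crux has a genuine gap. You propose to absorb each diagonal triangle into a distinct size-$4$ block via a $J_4$ upgrade, which requires a system of distinct representatives assigning to every point $x$ a size-$4$ block through $x$. Lemma~\ref{pbd456} gives no control whatsoever over the block-size distribution: a PBD$(n,\{4,5,6\})$ may contain few or no blocks of size $4$ at all (any PBD$(n,\{5\})$, e.g.\ the affine plane of order $5$ for $n=25$, qualifies), in which case no such assignment exists, let alone one satisfying Hall's condition. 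You defer this to ``verification inside the Wilson-type constructions,'' but that is exactly the unproved step, and nothing you cite supplies it. The paper sidesteps the issue entirely by starting from a PBD$(n+1,\{4,5,6\})$ and deleting a point: the truncated blocks form a parallel class partitioning the $n$ remaining points, and replacing each such block of size $m\in\{3,4,5\}$ by $K_{m,m,m}$ (using SBC$(3)$, SBC$(4)$, SBC$(5)$) absorbs all diagonal triangles at once, with no matching argument needed.

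There is a second gap in your exceptional cases. Your list $\{2,3,7,11,19,23\}$ of orders to be built from scratch includes $11$, $19$ and $23$, for which you claim ad hoc cubes are ``recorded in the appendix.'' No such cubes appear there, and explicit $19\times19\times19$ or $23\times23\times23$ examples would be unreasonable to exhibit. The paper's shift to PBD$(n+1,\cdot)$ moves $19$ and $23$ into the generic case (since $20$ and $24$ are admissible orders in Lemma~\ref{pbd456}), and its exceptional hard cases become $n=11,13,17$, each handled by a bespoke small design (a twice-extended affine plane of order $3$, the cyclic projective plane of order $3$, and an extended affine plane of order $4$) whose blocks are replaced by $J_3$, $J_4$, $K_3\times K_m$, $K_{m,m,m}$ pieces already in the library. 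As written, your argument does not establish the theorem for $n=11,19,23$, nor for general large $n$.
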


\begin{proof}
Suppose first that there exists a PBD$(n+1,\{4,5,6\})$.  We claim there exists an SBC$(n)$.

If we delete one element from the hypothesized PBD, the result is a pairwise balanced design on $n$ elements with a `parallel class' $\mathcal{A}$ of blocks with sizes in $\{3,4,5\}$, and other block sizes in $\{4,5,6\}$.  Replace every element $x$ with three vertices $x_1,x_2,x_3$.
Replace every block $B$, where $|B|=m \in \{3,4,5,6\}$, either by the graph $K_{m,m,m}$ if $B \in \mathcal{A}$, or by $K_3 \times K_m$ otherwise.
The result is an edge decomposition of $K_{n,n,n}$ into graphs isomorphic to $K_{m,m,m}$ for $m \in \{3,4,5\}$, and $K_3 \times K_m$ for $m \in \{4,5,6\}$.  From Construction~\ref{cons} and our examples given in the appendix, it follows that $\Delta_0(K_{n,n,n}) \neq \emptyset$; that is, there exists an SBC$(n)$.

In view of Lemma~\ref{pbd456}, the preceding construction leaves as exceptions $n \in \{7,\dots,11,13,14,17,18,22\}$.  Except for $n=7,11,13,17$, each of these values has a prime divisor less than or equal to $5$, and so an SBC$(n)$ exists by Lemma~\ref{product} and the Sarvate--Beam cubes of order $2,3,$ and $5$ given in the appendix.  An SBC$(7)$ was built in Example \ref{7}.  We turn to the remaining three values.

For $n=11$, we start with an affine plane of order three, or PBD$(9,\{3\})$, and extend two parallel classes using one new element for each to produce a PBD$(11,\{2,3,4\})$. The block set is, say,
\begin{center}
\begin{tabular}{lllll}
$\{\underline{1},2,3,\infty_1\},$ & $\{1,\underline{4},7,\infty_2\},$ & $\{1,\underline{5},9\},$ & $\{1,6,8\}$, \\
$\{4,5,\underline{6},\infty_1\},$ & $\{\underline{2},5,8,\infty_2\},$ &  $\{2,6,\underline{7}\},$ & $\{2,4,9\}$, \\
$\{7,\underline{8},9,\infty_1\},$ & $\{3,6,\underline{9},\infty_2\},$ & $\{\underline{3},4,8\},$ & $ \{3,5,7\}$, & $\{\underline{\infty_1},\underline{\infty_2}\}$.
\end{tabular}
\end{center}

Blocks with one underlined element are replaced with $J_3$ or $J_4$ according to the block size. The underlined element takes the role of filled vertices as in Example~\ref{7}. The remaining blocks of size three are replaced with $K_3 \times K_3$. The block of size two is replaced with $K_{2,2,2}$.  The above set of graphs decompose $K_{11,11,11}$, and each satisfies the assumptions of Construction~\ref{cons}.  This gives an SBC$(11)$.

For $n=13$, we take a projective plane of order three, or PBD$(13,\{4\})$, with blocks
$$x+\{\underline{0},1,3,9\},~x=0,1,2,\dots,12,$$
where addition, mod 13, distributes into the block.  Using this cyclic structure, it is possible to underline exactly one element from each block, as shown.  Accordingly, replace each block with a copy of $J_4$.  This gives a covering of the edges of $K_{13,13,13}$ as needed for 
Construction~\ref{cons}, and hence an SBC$(13)$.

For $n=17$, we start with an affine plane of order four and extend one parallel class to produce a PBD$(17,\{4,5\})$. Explicitly, the blocks can be taken as
\begin{center}
\small
\begin{tabular}{lllll}
$\{ \underline{1}, \underline{2}, \underline{3}, \underline{4}, \underline{\infty}\},$& $\{1, \underline{5}, 9, 16\},$&$\{2, 5, 11, \underline{15}\},$& $\{3, 5, 7, \underline{12}\},$&$\{ 4, 5, 8,14\},$\\
$\{ 5, 6, 10, 13, \infty\},$& $\{1,\underline{ 6}, 7, 8\},$&$\{ 2, 6, 12, \underline{14}\},$& $\{3, 6, \underline{9}, 15\},$&$\{ 4, 6, 11, 16\},$\\
$\{7, 9, 11, 14, \infty\},$& $\{1, \underline{10}, 14, 15\},$&$\{ 2, \underline{7},10,  16\},$ &$\{ 3, 8, 10, \underline{11}\},$&$\{ 4, 9, 10, 12\},$\\
$\{ 8, 12, 15, 16,\infty\},$&$\{ 1, 11, 12,\underline{13}\},$&$\{ 2, \underline{8},9,13\},$&$\{3, 13, 14, \underline{16}\},$&$\{4, 7, 13, 15\}$.
\end{tabular}
\end{center}
We have again underlined each element exactly once.  The first block is to be replaced by $K_{5,5,5}$.  Blocks with exactly one underlined element are to be replaced by $J_4$.  Other blocks are replaced by $K_3 \times K_4$ and $K_3 \times K_5$ according to the size.  Similar to before, we have covered the edges of $K_{17,17,17}$ so as to satisfy the hypotheses of Construction~\ref{cons}.  The result is an SBC$(17)$.
\end{proof}

\section*{Appendix}

Apart from the simple case of SBC$(2)$, our proof requires only seven explicit cubes or cube-variants.  
Layers of the $n\times n \times n$ cube are given in separate $n \times n$ grids and line sums appear in bold text.

\medskip
\noindent
SBC$(3)$ \hspace{3mm} (underlying graph $K_{3,3,3}$; see also \cite{DSG})

\medskip
\noindent
\tabcolsep=3pt
\begin{tabular}{ccc|c}
   0 &    1 &   12 & \textbf{13} \\
   2 &    3 &    1 & \textbf{6} \\
   0 &   10 &    5 & \textbf{15} \\
\hline
\textbf{2}  & \textbf{14}  & \textbf{18}
\end{tabular}
\hspace{5mm}
\begin{tabular}{ccc|c}
   0 &   14 &    8 & \textbf{22} \\
   0 &    5 &    0 & \textbf{5} \\
   3 &    7 &    0 & \textbf{10} \\
\hline
\textbf{3}  & \textbf{26}  & \textbf{8}
\end{tabular}
\hspace{5mm}
\begin{tabular}{ccc|c}
   0 &    1 &    0 & \textbf{1} \\
   7 &   11 &    3 & \textbf{21} \\
   4 &    0 &   20 & \textbf{24} \\
\hline
\textbf{11}  & \textbf{12}  & \textbf{23}
\end{tabular}
\hspace{5mm}
\begin{tabular}{ccc}
\textbf{0} & \textbf{16} & \textbf{20}\\
\textbf{9} & \textbf{19} & \textbf{4}\\
\textbf{7} & \textbf{17} & \textbf{25}\\
~
\end{tabular}
\normalsize

\noindent
SBC$(5)$ \hspace{3mm} (underlying graph $K_{5,5,5}$)

\medskip
\noindent
\small
\tabcolsep=3pt
\begin{tabular}{ccccc|c}
  16 &    9 &    6 &    9 &    1 & \textbf{41} \\
  14 &    0 &    0 &    6 &   27 & \textbf{47} \\
   4 &   13 &    5 &   14 &    0 & \textbf{36} \\
  20 &    2 &    0 &    2 &    5 & \textbf{29} \\
   7 &    8 &   41 &    0 &    4 & \textbf{60} \\
\hline
\textbf{61}  & \textbf{32}  & \textbf{52}  & \textbf{31}  & \textbf{37}
\end{tabular}
\hspace{5mm}
\begin{tabular}{ccccc|c}
   0 &    0 &   10 &    2 &    0 & \textbf{12} \\
   1 &    2 &    2 &    1 &    0 & \textbf{6} \\
   0 &    0 &    0 &    2 &    0 & \textbf{2} \\
   0 &   60 &   14 &    0 &    0 & \textbf{74} \\
   0 &    7 &    8 &   49 &    3 & \textbf{67} \\
\hline
\textbf{1}  & \textbf{69}  & \textbf{34}  & \textbf{54}  & \textbf{3}
\end{tabular}
\hspace{5mm}
\begin{tabular}{ccccc|c}
  15 &   34 &    2 &    6 &   15 & \textbf{72} \\
  35 &    0 &    3 &    0 &    2 & \textbf{40} \\
  13 &   13 &   16 &    1 &    2 & \textbf{45} \\
   0 &    0 &    0 &    0 &    0 & \textbf{0} \\
   8 &    2 &    7 &    0 &    1 & \textbf{18} \\
\hline
\textbf{71}  & \textbf{49}  & \textbf{28}  & \textbf{7}  & \textbf{20}
\end{tabular}

\medskip
\noindent
\begin{tabular}{ccccc|c}
  10 &    0 &    0 &    0 &    5 & \textbf{15} \\
   1 &    6 &    5 &    5 &    0 & \textbf{17} \\
   2 &    0 &    0 &   39 &   14 & \textbf{55} \\
   5 &    2 &   32 &   12 &    0 & \textbf{51} \\
  41 &    5 &    1 &   12 &   11 & \textbf{70} \\
\hline
\textbf{59}  & \textbf{13}  & \textbf{38}  & \textbf{68}  & \textbf{30}
\end{tabular}
\hspace{5mm}
\begin{tabular}{ccccc|c}
   5 &    7 &    6 &   25 &    0 & \textbf{43} \\
   2 &    1 &    1 &   15 &    4 & \textbf{23} \\
  29 &    0 &    4 &    2 &    0 & \textbf{35} \\
  37 &    0 &   20 &    0 &    0 & \textbf{57} \\
   0 &    0 &    8 &    2 &    0 & \textbf{10} \\
\hline
\textbf{73}  & \textbf{8}  & \textbf{39}  & \textbf{44}  & \textbf{4}
\end{tabular}
\hspace{5mm}
\begin{tabular}{ccccc}
\textbf{46} & \textbf{50} & \textbf{24} & \textbf{42} & \textbf{21}\\
\textbf{53} & \textbf{9} & \textbf{11} & \textbf{27} & \textbf{33}\\
\textbf{48} & \textbf{26} & \textbf{25} & \textbf{58} & \textbf{16}\\
\textbf{62} & \textbf{64} & \textbf{66} & \textbf{14} & \textbf{5}\\
\textbf{56} & \textbf{22} & \textbf{65} & \textbf{63} & \textbf{19}\\
~
\end{tabular}

\normalsize

\medskip
\noindent
SBHC$(4,1^4)$ \hspace{3mm} (underlying graph $K_3 \times K_4$)

\medskip
\noindent
\small
\begin{tabular}{cccc|c}
. & . & . & . & \\
. & . &   14 &    0 & \textbf{14} \\
. &    1 & . &    2 & \textbf{3} \\
. &   18 &    9 & . & \textbf{27} \\
\hline
 & \textbf{19}  & \textbf{23}  & \textbf{2}
\end{tabular}
\hspace{5mm}
\begin{tabular}{cccc|c}
. & . &   26 &    8 & \textbf{34} \\
. & . & . & . & \\
  22 & . & . &    9 & \textbf{31} \\
   2 & . &    6 & . & \textbf{8} \\
\hline
\textbf{24}  &  & \textbf{32}  & \textbf{17}
\end{tabular}
\hspace{5mm}
\begin{tabular}{cccc|c}
. &    4 & . &   25 & \textbf{29} \\
   1 & . & . &    0 & \textbf{1} \\
. & . & . & . & \\
   5 &    0 & . & . & \textbf{5} \\
\hline
\textbf{6}  & \textbf{4}  &  & \textbf{25}
\end{tabular}
\hspace{5mm}
\begin{tabular}{cccc|c}
. &   16 &    0 & . & \textbf{16} \\
   9 & . &   21 & . & \textbf{30} \\
   0 &   12 & . & . & \textbf{12} \\
. & . & . & . & \\
\hline
\textbf{9}  & \textbf{28}  & \textbf{21}  &
\end{tabular}
\hspace{5mm}
\begin{tabular}{cccc}
 & \textbf{20} & \textbf{26} & \textbf{33}\\
\textbf{10} &  & \textbf{35} & \textbf{0}\\
\textbf{22} & \textbf{13} &  & \textbf{11}\\
\textbf{7} & \textbf{18} & \textbf{15} & \\
~
\end{tabular}

\normalsize
\medskip
\noindent
SBHC$(5,1^5)$ \hspace{3mm} (underlying graph $K_3 \times K_5$)
\medskip

\noindent
\small
\tabcolsep=2.5pt
\begin{tabular}{ccccc|c}
. & . & . & . & . & \\
. & . &    0 &   45 &    3 & \textbf{48} \\
. &    0 & . &   10 &    1 & \textbf{11} \\
. &    3 &    0 & . &    3 & \textbf{6} \\
. &   24 &    0 &    4 & . & \textbf{28} \\
\hline
 & \textbf{27}  & \textbf{0}  & \textbf{59}  & \textbf{7}
\end{tabular}
\hspace{6mm}
\tabcolsep=2pt
\begin{tabular}{ccccc|c}
. & . &   49 &    9 &    0 & \textbf{58} \\
. & . & . & . & . & \\
   1 & . & . &    9 &    2 & \textbf{12} \\
   6 & . &    2 & . &    2 & \textbf{10} \\
  19 & . &    0 &   25 & . & \textbf{44} \\
\hline
\textbf{26}  &  & \textbf{51}  & \textbf{43}  & \textbf{4}
\end{tabular}
\hspace{6mm}
\begin{tabular}{ccccc|c}
. &    8 & . &   27 &    1 & \textbf{36} \\
   3 & . & . &   10 &   27 & \textbf{40} \\
. & . & . & . & . & \\
   3 &   12 & . & . &   19 & \textbf{34} \\
  12 &   10 & . &    0 & . & \textbf{22} \\
\hline
\textbf{18}  & \textbf{30}  &  & \textbf{37}  & \textbf{47}
\end{tabular}

\medskip
\noindent
\begin{tabular}{ccccc|c}
. &    9 &    1 & . &   15 & \textbf{25} \\
  42 & . &    2 & . &    2 & \textbf{46} \\
   4 &    4 & . & . &    0 & \textbf{8} \\
. & . & . & . & . & \\
   7 &    1 &   49 & . & . & \textbf{57} \\
\hline
\textbf{53}  & \textbf{14}  & \textbf{52}  &  & \textbf{17}
\end{tabular}
\hspace{5mm}
\begin{tabular}{ccccc|c}
. &   22 &    0 &   19 & . & \textbf{41} \\
   0 & . &    0 &    1 & . & \textbf{1} \\
   0 &    9 & . &    0 & . & \textbf{9} \\
  33 &    0 &   21 & . & . & \textbf{54} \\
. & . & . & . & . & \\
\hline
\textbf{33}  & \textbf{31}  & \textbf{21}  & \textbf{20}  &
\end{tabular}
\hspace{5mm}
\begin{tabular}{ccccc}
 & \textbf{39} & \textbf{50} & \textbf{55} & \textbf{16}\\
\textbf{45} &  & \textbf{2} & \textbf{56} & \textbf{32}\\
\textbf{5} & \textbf{13} &  & \textbf{19} & \textbf{3}\\
\textbf{42} & \textbf{15} & \textbf{23} &  & \textbf{24}\\
\textbf{38} & \textbf{35} & \textbf{49} & \textbf{29} & \\
~
\end{tabular}

\normalsize
\medskip
\noindent
SBHC$(6,1^6)$ \hspace{3mm} (underlying graph $K_3 \times K_6$)

\medskip
\noindent
\small
\tabcolsep=2pt
\begin{tabular}{cccccc|c}
. & . & . & . & . & . & \\
. & . &    6 &   12 &    1 &    9 & \textbf{28} \\
. &    5 & . &    4 &   12 &   16 & \textbf{37} \\
. &    2 &    3 & . &    0 &    2 & \textbf{7} \\
. &    3 &   11 &    1 & . &   36 & \textbf{51} \\
. &   19 &    2 &    4 &    2 & . & \textbf{27} \\
\hline
 & \textbf{29}  & \textbf{22}  & \textbf{21}  & \textbf{15}  & \textbf{63}
\end{tabular}
\hspace{5mm}
\begin{tabular}{cccccc|c}
. & . &    1 &    0 &    1 &    1 & \textbf{3} \\
. & . & . & . & . & . & \\
   5 & . & . &    0 &    0 &    0 & \textbf{5} \\
   8 & . &    1 & . &   14 &   16 & \textbf{39} \\
  70 & . &    0 &    0 & . &   14 & \textbf{84} \\
   5 & . &   43 &    0 &   32 & . & \textbf{80} \\
\hline
\textbf{88}  &  & \textbf{45}  & \textbf{0}  & \textbf{47}  & \textbf{31}
\end{tabular}
\hspace{5mm}
\begin{tabular}{cccccc|c}
. &    7 & . &    1 &   24 &   16 & \textbf{48} \\
  29 & . & . &    7 &    5 &    1 & \textbf{42} \\
. & . & . & . & . & . & \\
   3 &   52 & . & . &   10 &    1 & \textbf{66} \\
   0 &   11 & . &    0 & . &    1 & \textbf{12} \\
   3 &    0 & . &    9 &    4 & . & \textbf{16} \\
\hline
\textbf{35}  & \textbf{70}  &  & \textbf{17}  & \textbf{43}  & \textbf{19}
\end{tabular}

\medskip
\noindent
\begin{tabular}{cccccc|c}
. &    3 &    0 & . &   13 &   16 & \textbf{32} \\
   2 & . &    0 & . &   43 &   36 & \textbf{81} \\
  40 &   10 & . & . &    6 &   11 & \textbf{67} \\
. & . & . & . & . & . & \\
   7 &   18 &   42 & . & . &   20 & \textbf{87} \\
   8 &   21 &   20 & . &   12 & . & \textbf{61} \\
\hline
\textbf{57}  & \textbf{52}  & \textbf{62}  &  & \textbf{74}  & \textbf{83}
\end{tabular}
\hspace{5mm}
\begin{tabular}{cccccc|c}
. &   21 &   56 &    0 & . &    1 & \textbf{78} \\
   0 & . &    0 &    9 & . &    0 & \textbf{9} \\
   0 &   12 & . &    5 & . &    9 & \textbf{26} \\
   2 &    4 &    0 & . & . &   66 & \textbf{72} \\
. & . & . & . & . & . & \\
  38 &    1 &    0 &   10 & . & . & \textbf{49} \\
\hline
\textbf{40}  & \textbf{38}  & \textbf{56}  & \textbf{24}  &  & \textbf{76}
\end{tabular}
\hspace{5mm}
\begin{tabular}{cccccc|c}
. &   38 &    7 &    0 &   37 & . & \textbf{82} \\
  48 & . &    5 &    2 &    4 & . & \textbf{59} \\
  15 &   50 & . &    1 &    2 & . & \textbf{68} \\
   1 &    0 &    4 & . &    1 & . & \textbf{6} \\
   9 &    1 &    2 &    1 & . & . & \textbf{13} \\
. & . & . & . & . & . & \\
\hline
\textbf{73}  & \textbf{89}  & \textbf{18}  & \textbf{4}  & \textbf{44}  &
\end{tabular}

\medskip
\noindent
\begin{tabular}{cccccc}
 & \textbf{69} & \textbf{64} & \textbf{1} & \textbf{75} & \textbf{34}\\
\textbf{79} &  & \textbf{11} & \textbf{30} & \textbf{53} & \textbf{46}\\
\textbf{60} & \textbf{77} &  & \textbf{10} & \textbf{20} & \textbf{36}\\
\textbf{14} & \textbf{58} & \textbf{8} &  & \textbf{25} & \textbf{85}\\
\textbf{86} & \textbf{33} & \textbf{55} & \textbf{2} &  & \textbf{71}\\
\textbf{54} & \textbf{41} & \textbf{65} & \textbf{23} & \textbf{50} & \\
~
\end{tabular}

\normalsize

\normalsize
\medskip
\noindent
SBHC$(3,1^2)$ \hspace{3mm} (underlying graph $J_3$)

\medskip
\noindent
\begin{tabular}{ccc|c}
   3 &    6 &   5 & \textbf{14} \\
   2 &    . &    1 & \textbf{3} \\
   2 &   13 &    . & \textbf{15} \\
\hline
\textbf{7}  & \textbf{19}  & \textbf{6}
\end{tabular}
\hspace{5mm}
\begin{tabular}{ccc|c}
  16 &   . &   0 & \textbf{16} \\
   . &    . &    . &  \\
   2 &    . &    . & \textbf{2} \\
\hline
\textbf{18}  &   & \textbf{0}
\end{tabular}
\hspace{5mm}
\begin{tabular}{ccc|c}
   1 &   11 &    . & \textbf{12} \\
   8 &   . &    . & \textbf{8} \\
   . &    . &    . & \\
\hline
\textbf{9}  & \textbf{11}  & 
\end{tabular}
\hspace{5mm}
\begin{tabular}{ccc}
\textbf{20} & \textbf{17} & \textbf{5}\\
\textbf{10} &  & \textbf{1}\\
\textbf{4} & \textbf{13} & \\
~
\end{tabular}

\normalsize
\medskip
\noindent
SBHC$(4,1^3)$ \hspace{3mm} 
(underlying graph $J_4$)

\medskip
\noindent
\small
\begin{tabular}{cccc|c}
   1 &   13 &    4 &    2 & \textbf{20} \\
   1 &    . &   19 &   10 & \textbf{30} \\
  25 &    7 &    . &    0 & \textbf{32} \\
   6 &    1 &    1 &    . & \textbf{8} \\
\hline
\textbf{33}  & \textbf{21}  & \textbf{24}  & \textbf{12}
\end{tabular}
\hspace{4mm}
\begin{tabular}{cccc|c}
   3 &    . &    0 &    1 & \textbf{4} \\
   . &    . &    . &    . &  \\
   3 &    . &    . &   15 & \textbf{18} \\
  25 &    . &    0 &    . & \textbf{25} \\
\hline
\textbf{31}  &   & \textbf{0}  & \textbf{16}
\end{tabular}
\hspace{4mm}
\begin{tabular}{cccc|c}
   1 &    4 &    . &   31 & \textbf{36} \\
  22 &    . &    . &    4 & \textbf{26} \\
   . &    . &    . &    . &  \\
   6 &    1 &    . &    . & \textbf{7} \\
\hline
\textbf{29}  & \textbf{5}  &   & \textbf{35}
\end{tabular}
\hspace{4mm}
\begin{tabular}{cccc|c}
   1 &    0 &   18 &    . & \textbf{19} \\
   0 &    . &    9 &    . & \textbf{9} \\
  10 &    3 &    . &    . & \textbf{13} \\
   . &    . &    . &    . &  \\
\hline
\textbf{11}  & \textbf{3}  & \textbf{27} 
\end{tabular}
\hspace{4mm}
\begin{tabular}{cccc}
\textbf{6} & \textbf{17} & \textbf{22} & \textbf{34}\\
\textbf{23} &  & \textbf{28} & \textbf{14}\\
\textbf{38} & \textbf{10} &  & \textbf{15}\\
\textbf{37} & \textbf{2} & \textbf{1} & \\
~
\end{tabular}


\end{document}